\newcommand \listoftodos{\section*{Todo list} \@starttoc{tdo}}
\newcommand\l@todo[2]
\noindent \textit{#2}, \parbox{10cm}{#1}\par} \makeatother
\theoremstyle{plain} 
\newtheorem{lemma}[equation]{Lemma}
\newtheorem{theorem}[equation]{Theorem}
\newtheorem*{roth}{K.~Roth's Theorem}
\newtheorem*{schmidt}{W.~Schmidt's Theorem}
\theoremstyle{definition}
\theoremstyle{remark}
\numberwithin{equation}{section}
\def\norm#1.#2.{\lVert#1\rVert_{#2}}
\def\Norm#1.#2.{\bigl\lVert#1\bigr\rVert_{#2}}
\def\NOrm#1.#2.{\Bigl\lVert#1\Bigr\rVert_{#2}}
\def\NORm#1.#2.{\biggl\lVert#1\biggr\rVert_{#2}}
\def\NORM#1.#2.{\Biggl\lVert#1\Biggr\rVert_{#2}}
\def\ip#1,#2,{\langle #1,#2\rangle}
\def\Ip#1,#2,{\bigl\langle#1,#2\bigr\rangle}
\def\IP#1,#2,{\Bigl\langle#1,#2\Bigr\rangle}
\def\Abs#1{\bigl\lvert#1\bigr\rvert}
\def\ABS#1{\Bigl\lvert#1\Bigr\rvert}
\def\XXint#1#2#3{{\setbox0=\hbox{$#1{#2#3}{\int}$}
     \vcenter{\hbox{$#2#3$}}\kern-.5\wd0}}
\def\eqdef{\stackrel{\mathrm{def}}{{}={}}}
\begin{document}
\title[Cyclic shifts of the Van der Corput set]{Cyclic shifts of the Van der Corput set}

\thanks{The author is  grateful to the Fields Institute and the Institute for Advanced Study for hospitality
 and to the National Science Foundation for support.}

\author[Dmitriy Bilyk]{Dmitriy Bilyk }
\address{University of South Carolina, Columbia, SC /
Institute for Advanced Study, Princeton, NJ}
\email{bilyk@math.ias.edu}

\begin{abstract}
In \cite{roth4}, K. Roth showed that the expected value of the $L^2$
discrepancy of the cyclic shifts of the $N$ point van der Corput set
is bounded by a constant multiple of $\sqrt{\log N}$, thus
guaranteeing the existence of  a shift with asymptotically minimal
$L^2$ discrepancy, \cite{MR0066435}. In the present paper, we
construct a specific example of such a shift.
\end{abstract}

\maketitle

\section{Introduction} 

Let $ \mathcal A_N \subset [0,1] ^{2}$ be a finite point set of
cardinality $ N$.  The extent of equidistribution of $\mathcal A_N$
can be measured by the \emph{discrepancy function}:
\begin{equation*}
D_{\mathcal A_N} (x_1,x_2) \coloneqq \sharp \bigl(\mathcal A_N \cap
[0, x_1)\times[0,x_2) \bigr) - N x_1\cdot x_2\,,
\end{equation*}
i.e. the difference between the actual and expected number of points
of $\mathcal A_N$ in the rectangle $[0, x_1)\times[0,x_2)$. The main
principle of the theory of {\em irregularities of distribution}
states that the size of this function must increase with $ N$. The
fundamental results in the subject are:

\begin{roth}(\cite{MR0066435}, 1954) For any set $ \mathcal A_N \subset
[0,1]^{2}$, we have
\begin{equation}\label{e.roth}
\norm D_{\mathcal A_N}. 2. \gtrsim (\log N) ^{1/2}
\end{equation}
where ``$\gtrsim$" stands for ``greater than a constant multiple
of".
\end{roth}



\begin{schmidt} (\cite{MR0319933}, 1972)
For any set $ \mathcal A_N \subset [0,1]^2$, we have
\begin{equation}\label{e.schmidt}
\norm D_{\mathcal A_N} .\infty . \gtrsim \log N \,.
\end{equation}
\end{schmidt}
Both theorems are known to be sharp in the order of magnitude (e.g.,
\cite{Cor35}, \cite{MR0082531}, \cite{roth3}, \cite{MR610701}). One
of the most famous examples, yielding sharpness of
\eqref{e.schmidt}, is the van der Corput ``digit-reversing" set,
\cite{Cor35}. For $N=2^n$ points, it can be defined as
\begin{equation}\label{e.vdc}
\mathcal V_n = \left\{ (0.a_1 a_2 \dots a_n 1,\, 0.a_n a_{n-1} \dots
a_2 a_1 1 ): \, a_i=0,1 \right\},
\end{equation}
where the coordinates are given in terms of the binary expansion.
Unfortunately, most ``classical" sets with minimal $L^\infty$ norm
of the discrepancy fail to meet the sharp bounds in the $L^2$ norm.
In fact, Halton and Zaremba \cite{HZ} proved that
\begin{equation}\label{e.hz}
\|D_{\mathcal V_n}\|_2^2 = \frac{n^2}{2^6}+ O(n)\approx (\log N)^2.
\end{equation}

There are three standard remedies in the theory for this
shortcoming. To achieve the  smallest possible order of the $L^2$
discrepancy, one can alter the sets in the following ways:

{\em 1. Davenport's Reflection Principle.} Informally, if $P$ has
low $L^\infty$ discrepancy, then the set $\widetilde{P} = P \cup
\{(1-x,y):\, (x,y) \in P \}$ has low $L^2$ discrepancy. This was
demonstrated by Davenport \cite{MR0082531} in the case of the
irrational lattice, and by Chen and Skriganov (\cite{MR1805869}, see
also \cite{MR0965955}) in the case of the van der Corput set.

{\em 2. Digit Scrambling.} This procedure, initially introduced in
\cite{chen2}, has been extensively studied; a comprehensive
discussion can be found in \cite{MR1697825}.

{\em 3. Cyclic shifts.} This transformation is the subject of this
paper. It has been proved by Roth, \cite{roth4}, (see also
\cite{roth3}, where the translation idea was originally used), that
for the cyclic shifts of the van der Corput set
\begin{equation}
\mathcal V^\alpha_n = \left\{ \big( (x+\alpha)\, \textup{mod}\, 1, y
\big):\, (x,y)\in \mathcal V_n \right\},
\end{equation}
the expected value of the $L^2$ discrepancy over $\alpha$ satisfies
\begin{equation}
\int_0^1 \| D_{\mathcal V_n^\alpha}\|_2 \, d\alpha \lesssim n^{1/2}
= \left( \log N \right)^{1/2}.
\end{equation}
This implies that there exists a particular cyclic shift of the van
der Corput distribution with minimal $L^2$ norm of the discrepancy
function. However, this was purely an existence proof and no
deterministic examples of such shifts have been constructed. In the
present paper, we ``de-randomize" this result and provide an
explicit value of $\alpha$, which asymptotically minimizes
$\|D_{\mathcal V_n^\alpha}\|_2$. We prove the following theorem
\begin{theorem}\label{t.main}
For $\alpha_0=1-\frac{k}{2^n}$, where $k\in \mathbb N$, in the
binary form, is given by
\begin{equation}\label{e.defk}
k := \big( \underbrace{000 \,\dots \, 00}_{n_0 \, \textup{digits}}
\underbrace{00001111 \,\dots \, 00001111}_{n_2 \, \textup{digits}}\,
\underbrace{000111 \,\dots \, 000111}_{n_1 \, \textup{digits}}
\big)_2 + 1,
\end{equation}
with $n_0+n_1+n_2=n$, $\frac{n_1}{n_2}=\frac{54}{17}$, and
$n_0<568$, the cyclically shifted van der Corput set $\mathcal
V_n^{\alpha_0}$ satisfies
\begin{equation}\label{e.main}
\Norm D_{\mathcal V_n^{\alpha_0}}. 2. \lesssim n^{1/2} = (\log
N)^{1/2}.
\end{equation}
\end{theorem}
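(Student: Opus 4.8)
The plan is to expand the discrepancy function in the two-dimensional Haar basis and reduce the $L^2$ bound to a sum of squared coefficients via Parseval. Writing
\[
D_{\mathcal V_n^{\alpha_0}} = \sum_{I,J} \langle D_{\mathcal V_n^{\alpha_0}},\, h_I \otimes h_J\rangle\, \frac{h_I \otimes h_J}{|I|\,|J|} + (\text{constant and one-dimensional terms}),
\]
where $I,J$ run over dyadic subintervals of $[0,1]$, orthogonality gives
\[
\| D_{\mathcal V_n^{\alpha_0}}\|_2^2 = \sum_{I,J} \frac{|\langle D_{\mathcal V_n^{\alpha_0}},\, h_I\otimes h_J\rangle|^2}{|I|\,|J|} + \text{(lower order)}.
\]
The pieces involving the constant function in one of the variables are the one-dimensional marginal discrepancies, which are easily seen to stay within the $O(n)$ budget, so the whole problem is to control the genuinely two-dimensional coefficients, which I would organize by the pair of generations $(i,j)$ with $|I|=2^{-i}$, $|J|=2^{-j}$.

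Next I would compute these coefficients explicitly using the digit-reversing structure. Because the first $j$ binary digits of the second coordinate of a van der Corput point are the last $j$ digits of its first coordinate, a dyadic box at generation $(i,j)$ prescribes the first $i$ and the last $j$ digits of $x_1$; this is precisely the $(0,n,2)$-net property, and for the unshifted set it forces the coefficients at scales $i+j=n$ to be essentially constant, which is the source of the Halton--Zaremba growth $\|D_{\mathcal V_n}\|_2 \approx n$ in \eqref{e.hz}. Introducing the cyclic shift $x_1 \mapsto (x_1 - k/2^n)\bmod 1$ replaces the digit string of $x_1$ by that of $x_1-k$, \emph{with carries}, so each coefficient acquires a factor depending on the binary digits of $k$ at the positions determined by $(i,j)$. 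The outcome should be a formula expressing $\|D_{\mathcal V_n^{\alpha_0}}\|_2^2$, up to $O(n)$, as an arithmetic sum
\[
\sum_{i+j \le n} 2^{-(i+j)} \Big| \sum_{\ell} (\pm 1)\, \cdots \Big|^2
\]
in which the signs and the inner summation are governed by the expansion of $k$.

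The heart of the proof is the cancellation estimate for this arithmetic sum. Roth's averaging argument guarantees $\int_0^1 \|D_{\mathcal V_n^\alpha}\|_2^2\,d\alpha = O(n)$, so a choice of $\alpha_0$ making the sum $O(n)$ must exist; the task is to read off the required digit pattern of $k$. I would rewrite the contribution of $k$ as a functional of its \emph{run-length profile} (the lengths of its maximal blocks of equal digits), since the relevant partial digit sums are determined by these runs. I expect a run of length $L$ to contribute a fixed amount to the leading (order $n^2$) part of the sum and a competing amount to a lower-order term, so that using runs exclusively of lengths $3$ and $4$, namely the blocks $000111$ and $00001111$, in the proportion $n_1:n_2=54:17$ makes the leading order-$n^2$ term cancel, leaving only the $O(n)$ remainder. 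The leading pad of $n_0$ zeros absorbs the boundary and carry effects, and the constraint $n_0<568$ keeps that contribution within the $O(n)$ budget; summing then yields $\|D_{\mathcal V_n^{\alpha_0}}\|_2^2 \lesssim n$, which is \eqref{e.main}.

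The main obstacle is precisely this last arithmetic step: carrying out the coefficient computation with enough precision to track exact constants, and then verifying that runs of lengths $3$ and $4$ in the ratio $54/17$ annihilate the leading term. The difficulty is twofold. First, the carries generated by ordinary (non-dyadic) addition in the cyclic shift couple digits across different scales, so the coefficients are not simply multiplicative in the digits of $k$ and the net structure is only approximately preserved. Second, the cancellation is delicate: the positive and negative contributions of the two run lengths must match to leading order, and it is exactly this balancing requirement that forces the unusual constants $\tfrac{n_1}{n_2}=\tfrac{54}{17}$ and $n_0<568$.
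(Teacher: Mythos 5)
There is a genuine gap here, and it is structural: you have located the difficulty in the wrong part of the spectrum. The Halton--Zaremba growth \eqref{e.hz} does not come from the genuinely two-dimensional coefficients at scales $i+j=n$; it comes entirely from the single zero-order coefficient, the mean $\int_{[0,1]^2} D_{\mathcal V_n}\,dx = n/8$ (Lemma \ref{l.integral}), since by \eqref{e.rest} one already has $\bigl\lVert D_{\mathcal V_n} - \int D_{\mathcal V_n} \bigr\rVert_2 \lesssim (\log N)^{1/2}$ for the \emph{unshifted} set. So the ``constant and one-dimensional terms'' that you dismiss as lower order are exactly the problem --- they contribute $(n/8)^2 = n^2/2^6$ --- while the two-dimensional coefficients you propose to analyze are already within the $O(n)$ budget and require no delicate cancellation. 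Consequently the constants $\frac{54}{17}$ and $568$ do not arise from annihilating an order-$n^2$ term in a two-dimensional Haar sum. In the paper they come from a purely one-dimensional digit computation: writing $1-\alpha_0 = k/2^n$, one needs $\sum_{p \in \mathcal V_n:\, p_1 < k/2^n} p_2 = \frac{k}{2} - \frac{n}{8} + O(1)$ as in \eqref{e.1p2}, which reduces to arranging $S(n,k-1) = \sum_{j<l} k_j k_l\, 2^{j-l-1} = \frac{n}{8} + O(1)$ (Lemma \ref{l.n8}); the blocks $000111$ and $00001111$ contribute $\frac{13}{108}$ and $\frac{19}{136}$ per digit by \eqref{e.k'3} and \eqref{e.k''3}, the ratio $\frac{n_1}{n_2}=\frac{54}{17}$ is simply the solution of the resulting linear interpolation to hit $\frac18$, and $568 = 71\cdot 2 \cdot 4$ is divisibility bookkeeping so that $n_1=\frac{54}{71}n$ and $n_2=\frac{17}{71}n$ are multiples of $6$ and $8$ respectively, with the $n_0$ leading zeros handling the remainder.

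The other half of the argument --- showing the shift does not disturb the nonzero modes --- is precisely where your worry about carries lives, and the paper's device for it is the one your plan sets aside: it uses the exponential Fourier basis rather than Haar exactly because exponentials diagonalize translation, so no carry analysis is ever needed. For $n_1 \neq 0$ the modulus of $\widehat{D_{\mathcal V_n^\alpha}}(n_1,n_2)$ \emph{equals} that of $\widehat{D_{\mathcal V_n}}(n_1,n_2)$ (see \eqref{e.nochange1}, \eqref{e.nochange2}), and in the only nontrivial case $n_1=0$, $n_2 = 2^s m \not\equiv 0 \bmod 2^n$, the coefficient moves by at most $\frac{1}{\pi m}$ by \eqref{e.diff1}, which sums to $O(n)$ after Parseval. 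In the Haar basis, by contrast, the carries genuinely couple scales (as you yourself observe), the shifted set is no longer tied to the dyadic structure, and the ``formula with signs governed by the expansion of $k$'' that your argument hinges on is never produced; your own summary concedes the heart of the proof is an unexecuted expectation. With the correct reduction the theorem follows in three lines: Parseval plus \eqref{e.rest} gives $\bigl\lVert \big(D_{\mathcal V_n^{\alpha_0}}\big)_{\mathbb Z^2\setminus(0,0)} \bigr\rVert_2 \lesssim (\log N)^{1/2}$, and the entire content of the choice \eqref{e.defk} is the single identity \eqref{e.dnk0}, namely $\int_{[0,1]^2} D_{\mathcal V_n^{\alpha_0}}\,dx = O(1)$.
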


{\em Remark.} The ``$+1$" in the end of \eqref{e.defk} is just a
minor nuisance, which simplifies some calculations, and is not
important. In fact, one can easily see that a cyclic shift by the
amount $\alpha = 1/N = 2^{-n}$ changes the discrepancy by at most
$1$ at each point.

We would like to point out that most constructions of sets with
minimal order of $L^p$ discrepancy (which are important in
applications to numerical integration) are probabilistic; explicit
constructions are rare. In fact, the first deterministic examples of
such sets in dimensions $d\ge 3$ have only been obtained quite
recently by Chen and Skriganov (\cite{MR1896098}, \cite{MR2283797}).

 The outline of the paper is the following: in \S2 we deal with the
quantities $\int_{[0,1]^2} D_{\mathcal V_n}(x)dx$ and
$\int_{[0,1]^2} D_{\mathcal V_n^\alpha}(x)dx$ (which can be viewed
as the ``zero-order" term of the expansion in any reasonable
orthonormal basis) and minimize the latter. In \S 3, we examine the
Fourier coefficients $\widehat{D_{\mathcal V_n}}(n_1, n_2)$ when
$(n_1,n_2)\neq (0,0)$  and show that they do not change too much
under cyclic shifts.

We will refer to the two parts of the discrepancy function as
``linear" and  ``counting":
\begin{align}\label{e.linear}
L_N (x_1,x_2) &= N x_1 \cdot x_2 \,,
\\  \label{e.counting}
C_ {\mathcal A_N} (x_1,x_2) &= \sum _{ p \in \mathcal A_N} \mathbf
1_{[p_1, 1)\times [p_2,1) } ( x_1,x_2) \,.
\end{align}
In proving upper bounds for the discrepancy function, one of course
needs to capture a large cancelation between these two.

\section{The integral of the discrepancy function}
Recall that in our definition of the van der Corput set, $\mathcal
V_n = \left\{ (0.a_1  \dots a_n 1,\, 0.a_n  \dots a_2 a_1 1
)\right\}$, both coordinates have $1$'s in the $(n+1)^{st}$ binary
place. This is just a technical modification, which ensures that,
for any $\alpha =j/2^n$, $j\in \mathbb Z$, the average value of both
coordinates in $\mathcal V_n^\alpha$ is one-half:
\begin{equation}\label{e.sumall}
\frac1{2^n}\sum_{(p_1,p_2)\in \mathcal V_n^\alpha} p_1=
\frac1{2^n}\sum_{(p_1,p_2)\in \mathcal V_n^\alpha} p_2 =\frac12.
\end{equation}
This makes many formulas look `cleaner' and is not essential to the
computations.

It has been noticed (see \cite{HZ}, \cite{BLPV}), that the quantity
$\int_{[0,1]^2} D_{\mathcal V_n}(x)dx$ is the main reason why $\|
D_{\mathcal V_n} \|_2$ is large. Indeed, if one compares
\eqref{e.hz} and \eqref{e.integral} below, it is easy to see that
\begin{equation}\label{e.rest}
\NOrm D_{\mathcal V_n} - \int D_{\mathcal V_n} .2. \lesssim (\log
N)^{1/2}.
\end{equation}
We include the proof of the lemma below for the sake of
completeness.
\begin{lemma}\label{l.integral} For the  van der Corput set $ \mathcal V_n$
\begin{equation}\label{e.integral}
\int _{[0,1) ^2 } D_{\mathcal V_n} (x) \; d x = \frac n8.
\end{equation}
\end{lemma}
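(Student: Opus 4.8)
The plan is to split the discrepancy function into its counting and linear parts, $D_{\mathcal V_n} = C_{\mathcal V_n} - L_N$ as in \eqref{e.linear}--\eqref{e.counting}, and to integrate each separately. The linear part integrates trivially: $\int_{[0,1)^2} L_N = 2^n \int_0^1 x_1\,dx_1 \int_0^1 x_2\,dx_2 = 2^{n-2}$. For the counting part, each indicator integrates to the area of its rectangle, $\int_{[0,1)^2}\mathbf 1_{[p_1,1)\times[p_2,1)} = (1-p_1)(1-p_2)$, so that $\int_{[0,1)^2} C_{\mathcal V_n} = \sum_{p\in\mathcal V_n}(1-p_1)(1-p_2)$.

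Next I would expand $(1-p_1)(1-p_2) = 1 - p_1 - p_2 + p_1 p_2$ and invoke the normalization \eqref{e.sumall}, which forces $\sum_{p} p_1 = \sum_p p_2 = 2^{n-1}$. The constant and first-order terms then collapse ($2^n - 2^{n-1} - 2^{n-1} = 0$), leaving $\int C_{\mathcal V_n} = \sum_{p\in\mathcal V_n} p_1 p_2$, and hence $\int D_{\mathcal V_n} = \sum_{p} p_1 p_2 - 2^{n-2}$. Everything thus reduces to evaluating the single sum $\sum_p p_1 p_2$.

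To compute this, I would write the two coordinates explicitly in terms of the binary digits: $p_1 = \sum_{i=1}^n a_i 2^{-i} + 2^{-(n+1)}$ and, by digit reversal, $p_2 = \sum_{j=1}^n a_j 2^{-(n+1-j)} + 2^{-(n+1)}$. Multiplying out and summing over all $2^n$ choices of $(a_1,\dots,a_n)\in\{0,1\}^n$, I would use the elementary moments $\sum_{\mathbf a} a_i = 2^{n-1}$, $\sum_{\mathbf a} a_i a_j = 2^{n-2}$ for $i\neq j$, and $\sum_{\mathbf a} a_i^2 = \sum_{\mathbf a} a_i = 2^{n-1}$. The cross term $\sum_{i,j} a_i a_j\, 2^{-i} 2^{-(n+1-j)}$ is the heart of the matter: the factor $2^{-i}2^{-(n+1-j)} = 2^{-(n+1)}\,2^{j-i}$ converts it into a weighted double geometric sum governed by $\sum_{i,j} 2^{j-i} = (2^{n+1}-2)(1-2^{-n})$.

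The main obstacle—really the only computational subtlety—is the bookkeeping in this double sum, since the diagonal $i=j$ and off-diagonal $i\neq j$ contributions use different moments and must be separated carefully. The diagonal contributes $n\cdot 2^{-(n+1)}\cdot 2^{n-1} = n/4$, while subtracting the diagonal back out of the off-diagonal geometric sum produces a matching $-n/8$, so that the surviving linear-in-$n$ term is exactly $n/8$. The leading off-diagonal mass yields a term $2^{n-2}$ that cancels precisely against the $-2^{n-2}$ coming from the linear part, and the lower-order pieces arising from $2^{-(n+1)}\sum_j a_j 2^{-(n+1-j)}$, $2^{-(n+1)}\sum_i a_i 2^{-i}$, and $2^{-2(n+1)}$, together with the leftover constants and the exponentially small tails, all cancel as well. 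What remains is $\int_{[0,1)^2} D_{\mathcal V_n} = n/8$, as claimed.
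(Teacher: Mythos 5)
Your proof is correct and takes essentially the same route as the paper: both split $D_{\mathcal V_n}$ into the linear and counting parts \eqref{e.linear}--\eqref{e.counting}, reduce the counting part to $\sum_{p\in\mathcal V_n}(1-p_1)(1-p_2)$, and evaluate it through the binary-digit moments --- the paper merely phrases the identical computation probabilistically, writing the digits as independent Bernoulli variables $X_j$ and computing an expectation, where you carry out the equivalent deterministic sums. Your bookkeeping checks out exactly (the diagonal $n/4$ minus the $n/8$ removed from the off-diagonal geometric sum leaves $n/8$, the $2^{n-2}$ terms cancel against the linear part, and the lower-order pieces vanish identically), reproducing the paper's intermediate value $\int C_{\mathcal V_n}=2^{n-2}+\frac{n}{8}$ from \eqref{e.I4}.
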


\begin{proof} The linear part of the discrepancy function clearly gives us
\begin{equation}\label{e.I1}
\int _{[0,1) ^2 } L_N  \; d  x= 2 ^{n-2}\,.
\end{equation}

Let $ X_1 ,\dotsc, X_n$ be independent random variables taking
values $ \{0,1\}$ with probability $\frac12$. A straightforward
computation yields


\begin{align} \notag
\int _{[0,1) ^2 } C _{\mathcal V_n} (x_1, x_2) \; dx_1\, dx_2 &=
\sum _{(p_1,p_2) \in \mathcal V_n} (1-p_1)(1-p_2)\\
 \notag & = 2
^{n} \mathbb E \Biggl[ 1- \sum _{j=1} ^{n} X_j 2 ^{-j} -
2^{-n-1}\Biggr] \Biggl[ 1-\sum _{k=1} ^{n} X_k 2 ^{-n+k-1} -
2^{-n-1}\Biggr]
\\
\label{e.I4} &= 2 ^{n-2} +\frac{n}{8}.
\end{align}
Combining \eqref{e.I1} and \eqref{e.I4} proves the lemma.

\end{proof}
%
%


In what follows we prove that the average of $\int_{[0,1]^2}
D_{\mathcal V_n^\alpha} dx $ over $\alpha$ is zero. Besides, we
construct a specific value of $\alpha_0$, for which $$
\int_{[0,1]^2} D_{\mathcal V_n^{\,\alpha_0}}\,\, dx \approx 1.$$


\begin{theorem}
 Assume that $\alpha\in [0,1)$ is an $n$-digit binary number. Then
 \begin{equation}
 \mathbb E_\alpha \int_{[0,1]^2} D_{\mathcal V_n^\alpha} dx
 = 0.
 \end{equation}
 \end{theorem}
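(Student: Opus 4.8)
The plan is to exploit the fact that a cyclic shift by $\alpha=j/2^n$ acts only on the first coordinate, while the second coordinate — and therefore the entire linear part $L_N$ — is untouched. First I would split $D_{\mathcal V_n^\alpha}=C_{\mathcal V_n^\alpha}-L_N$ and note that $\int_{[0,1)^2}L_N\,dx=2^{n-2}$ is independent of $\alpha$ by \eqref{e.I1}, so this term survives the averaging unchanged. The whole content of the theorem then reduces to showing that the $\alpha$-average of the counting integral is also exactly $2^{n-2}$.

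For the counting part I would reuse the representation from the proof of Lemma~\ref{l.integral}, namely $\int_{[0,1)^2}C_{\mathcal V_n^\alpha}\,dx=\sum_{(p_1,p_2)}(1-p_1)(1-p_2)$, where the sum is over the $2^n$ points of $\mathcal V_n^\alpha$. Since $\mathbb E_\alpha$ is a finite average over the $2^n$ values $\alpha=j/2^n$, $j=0,\dots,2^n-1$, I would interchange it with the finite sum over points. Because the second coordinate $p_2$ is fixed under the shift, this factors as $\mathbb E_\alpha\int C_{\mathcal V_n^\alpha}\,dx=\sum(1-p_2)\,\mathbb E_j\bigl(1-p_1^{(j)}\bigr)$, where $p_1^{(j)}$ denotes the first coordinate of a given point after shifting by $j/2^n$.

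The key computation is that $\mathbb E_j\bigl(1-p_1^{(j)}\bigr)=\tfrac12$ for every point. Writing the original first coordinate as $m/2^n+2^{-(n+1)}$ with $m\in\{0,\dots,2^n-1\}$, the shift replaces $m$ by $(m+j)\bmod 2^n$; as $j$ runs through a complete residue system modulo $2^n$, so does $(m+j)\bmod 2^n$, independently of the starting value $m$. Hence $\mathbb E_j\,p_1^{(j)}=\frac{1}{2^n}\sum_{\ell=0}^{2^n-1}\bigl(\ell/2^n+2^{-(n+1)}\bigr)=\bigl(\tfrac12-2^{-(n+1)}\bigr)+2^{-(n+1)}=\tfrac12$. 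This is exactly where the technical placement of a $1$ in the $(n+1)$st binary digit of each coordinate pays off: it nudges the average of the $2^n$ equally-spaced grid values to precisely $1/2$, which is what makes the final equality clean.

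Substituting back gives $\mathbb E_\alpha\int C_{\mathcal V_n^\alpha}\,dx=\tfrac12\sum(1-p_2)=\tfrac12\bigl(2^n-\sum p_2\bigr)$, and invoking the normalization \eqref{e.sumall}, which yields $\sum p_2=2^{n-1}$, I obtain $\tfrac12\cdot 2^{n-1}=2^{n-2}$. Subtracting the $\alpha$-independent linear contribution $2^{n-2}$ produces $0$, as claimed. I do not expect a genuine obstacle here; the only point requiring care is the modular bookkeeping, and the one conceptual step is recognizing that averaging over all shifts turns each point's first coordinate into a uniform average over the full dyadic grid, thereby decoupling the two coordinates and reducing everything to the coordinatewise normalization \eqref{e.sumall}.
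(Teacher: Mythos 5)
Your proof is correct, and it takes a genuinely different and more elementary route than the paper. The paper fixes $k=2^n(1-\alpha)$ and works with $\alpha$ fixed: it derives the identity \eqref{e.cn} relating $\int C_{\mathcal V_n^\alpha}$ to $\int C_{\mathcal V_n}$, introduces the digit-counting functions $f_l(k)$ of \eqref{e.defflk}--\eqref{e.flk2} to expand $\sum_{p_1<k/2^n}p_2$ as in \eqref{e.fk}, and then averages $f_l(k)$ over $k$ term by term to get \eqref{e.cn1}, finally combining with Lemma \ref{l.integral}. You instead average \emph{first}: interchanging $\mathbb E_\alpha$ with the finite sum $\sum_p(1-p_1)(1-p_2)$ and observing that the orbit of each first coordinate under all $2^n$ shifts is exactly the full shifted dyadic grid $\{\ell/2^n+2^{-(n+1)}\}$, so $\mathbb E_j\bigl(1-p_1^{(j)}\bigr)=\tfrac12$ for every point — your modular bookkeeping, including the wrap-around (the $2^{-(n+1)}$ tail is unaffected since $p_1+\alpha=(m+j)/2^n+2^{-(n+1)}$ exceeds $1$ precisely when $m+j\ge 2^n$), is accurate — after which \eqref{e.sumall} and \eqref{e.I1} finish the computation. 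Your symmetry argument is shorter, transparent, and makes clear that the result is an exact decoupling of the two coordinates rather than a delicate cancellation. What the paper's heavier computation buys is reusability: the closed-form digit expansion of $\sum_{p_1<k/2^n}p_2$ in terms of the binary digits of $k-1$ (equations \eqref{e.flk}--\eqref{e.kj}) is exactly what is needed afterwards to \emph{de-randomize} the statement and construct the explicit shift of Lemma \ref{l.n8}; your averaging argument, by its nature, cannot single out a specific good value of $\alpha$.
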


\begin{proof}
 We denote $1-\alpha =\frac{k}{2^n}$ ($k=1,\dots , 2^n$) and
start
  with the following  computation:

\begin{align}
\nonumber \int_{[0,1]^2} C_{\mathcal V_n^\alpha} & =  \sum_{p\in
\mathcal
V_n^\alpha} (1-p_1)(1-p_2)\\
\nonumber & =  \sum_{p\in \mathcal V_n:\, p_1< 1-\alpha}
(1-p_1-\alpha)\cdot(1-p_2)\,\, + \sum_{p\in \mathcal V_n:\, p_1 >
1-\alpha} (2-p_1-\alpha)\cdot(1-p_2)\\
\nonumber & = \int_{[0,1]^2} C_{\mathcal V_n} dx +
(1-\alpha)\sum_{p\in \mathcal V_n} (1-p_2) - \sum_{p\in \mathcal
V_n:\, p_1< 1-\alpha}
(1-p_2)\\
\label{e.cn}&= \int_{[0,1]^2} C_{\mathcal V_n} dx \,\,-\,\,
\frac{k}{2} + \sum_{p\in \mathcal V_n:\, p_1< k/2^{n}} p_2.
\end{align}
Next, we examine the behavior of the last sum above. Using the
structure of the van der Corput set, we can write
\begin{equation}\label{e.fk}
\sum_{p\in \mathcal V_n:\, p_1< k/2^{n}} p_2 =\sum_{l=1}^n 2^{-l}
f_l (k)+ k 2^{-n-1},
\end{equation}
where $k 2^{-n-1}$ comes from the final $1$'s in the expansion of
$p_2$ and
\begin{equation}\label{e.defflk}
f_l (k)=\#\{0\le j \le k-1:\,\,{\textup{ such that the $l^{th}$
(from the end) 
binary digit of
$j$ is $1$}} \}.
\end{equation}
It can be seen that
\begin{equation}\label{e.flk1}
f_l(k)=2^{l-1}m \,\,\,\,\,\,\,\,\,\,\,\,\, \textup{if } k-1=2^l m,\,
2^l m +1,\, ... \,, 2^l m + 2^{l-1}-1,\,\,\, \textup{and}
\end{equation}
\begin{equation}\label{e.flk2}
f_l(k)=2^{l-1}m + j \,\,\,\,\,\,\,\,\, \textup{if } k-1=2^l
m+2^{l-1}+j-1,\textup{ where }\,1\le j \le 2^{l-1}.
\end{equation}
Thus, if we set $f_l(k) = 2^{l-1} m_l(k) + j_l(k)$, where $0\le
m_l(k) < 2^{n-l}$ and $1\le j_l(k) \le 2^{l-1}$, we have $\mathbb
E_k m_l(k) = \frac12 \cdot (2^{n-l}-1)$ and
$$\mathbb E_k j_l(k)=\frac12 \cdot \frac12(2^{l-1}+1), $$
where the extra one-half above comes from the fact that $j_l(k)=0$
half of the time. Thus
\begin{equation}
\mathbb E_k f_l(k) = 2^{n-2}  - 2^{l-2} + 2^{l-3} +\frac14.
\end{equation}
Plugging this into \eqref{e.fk}, we obtain
\begin{align}
\nonumber \mathbb E_k \sum_{p\in \mathcal V_n:\, p_1< k/2^{n}} p_2
& = \sum_{l=1}^n 2^{-l} \left( 2^{n-2} - 2^{l-3} +\frac14 \right)\, + \mathbb E_k k\cdot 2^{-n-1}\\
\label{e.cn1} & = 2^{n-2} - \frac{n}{8} +\frac14.
\end{align}
Finally, equation \eqref{e.cn}, together with \eqref{e.cn1} as well
as \eqref{e.integral}, yields
\begin{align}
\nonumber \mathbb E \int_{[0,1]^2} D_{\mathcal V_n^\alpha}  &=
\int_{[0,1]^2} D_{\mathcal V_n} dx \,\,-\,\,
\mathbb E_k \frac{k}{2} + \mathbb E_k \sum_{p\in \mathcal V_n:\, p_1< k/2^{n}} p_2\\
& = \frac{n}{8}\, - \,\left(2^{n-2} +\frac14 \right) \,+\, \left(
2^{n-2} - \frac{n}{8} +\frac14 \right) = 0.
\end{align}
\end{proof}

To facilitate the construction of an example, we further look at the
functions $f_l(k) = 2^{l-1} m_l(k) + j_l(k)$ defined  above,
\eqref{e.defflk}-\eqref{e.flk2}. Assume that $k-1$ is written in the
binary representation:
$$k -1 = \sum_{j=1}^n k_j\cdot 2^{j-1} = \big( k_n k_{n-1}\dots k_2 k_1\big)_2. $$
By construction, $m_l(k)=(k_n k_{n-1} \dots k_{l+1})_2$, besides,
when $k_l=0$, we have $j_l(k)=0$, and if $k_l=1$,
$j_l(k)=(k_{l-1}\dots k_1)_2 +1$. Thus, $f_l(k)$ can be written in
closed form in terms of digits of $k-1$ as follows
\begin{equation}\label{e.flk}
f_l(k) = \sum_{j=l+1}^n k_j 2^{j-2} + k_l \cdot \sum_{j=1}^{l-1}k_j
2^{j-1} \, + k_l.
\end{equation}
Indeed, if $k_l=0$, the last two terms will disappear, otherwise,
they'll equal exactly $j_l(k)$.

Plugging this into \eqref{e.fk}, we obtain
\begin{equation}\label{e.kj}
 \sum_{p\in \mathcal V_n:\, p_1 < k/2^{n}} p_2  =
\sum_{l=1}^{n-1} \sum_{j=l+1}^n k_j \cdot 2^{j-l-2} + \sum_{l=1}^n
k_l \cdot 2^{-l} + \sum_{l=2}^n \sum_{j=1}^{l-1} k_j \cdot k_l \cdot
2^{j-l-1}.
\end{equation}
Obviously, the second term above is bounded by one. Next we shall
look at the first term in \eqref{e.kj}. At this point we assume that
\begin{equation}\label{e.halfdigits}
\sum_{j=1}^n k_j =\frac{n}{2} + O(1),
\end{equation}
 i.e. approximately half of the binary digits of $k-1$ are ones and half are
zeros. We have
\begin{align}
\nonumber \sum_{l=1}^{n-1} \sum_{j=l+1}^n k_j \cdot 2^{j-l-2} & =
\frac12 \sum_{j=2}^n k_j \cdot 2^{j-1} \sum_{l=1}^{j-1} 2^{-l}
\,\,\,\,\,\,\,\,\,\,\,\,\, = \,\,\, \frac12 \sum_{j=2}^n k_j \cdot
2^{j-1} \cdot (1-
2^{-(j-1)})\\
\label{e.kj1} & = \frac12 \sum_{j=2}^n k_j \cdot 2^{j-1}\, - \,
\frac12 \sum_{j=2}^n k_j \,\,\, = \,\,\, \frac12 k - \frac{n}{4} +
O(1).
\end{align}

As to the last term of \eqref{e.kj}, we have the following lemma:
\begin{lemma}\label{l.n8}
For every $n\in \mathbb N$, there exists $k: \, 1\le k \le 2^n$ with
   $\sum_{j=1}^n k_j =
n/2 + O(1)$, where $k -1  = \big( k_n k_{n-1}\dots k_2 k_1\big)_2$,
so that
\begin{equation}\label{e.kj3}
\sum_{l=2}^n \sum_{j=1}^{l-1} k_j \cdot k_l \cdot 2^{j-l-1} =
\frac{n}{8} + O(1).
\end{equation}
\end{lemma}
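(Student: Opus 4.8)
The plan is to read the double sum in \eqref{e.kj3} as a distance‑weighted count of pairs of one‑digits. Writing $P=\{p:\,k_p=1\}$ for the positions of the ones in $k-1$, we have
\begin{equation*}
\sum_{l=2}^n \sum_{j=1}^{l-1} k_j\,k_l\,2^{j-l-1}
= \frac12 \sum_{\substack{p,q\in P\\ p<q}} 2^{-(q-p)},
\end{equation*}
so each pair contributes a weight decaying exponentially in the gap $q-p$. Hence only ones that are close together matter, and the sum is essentially a \emph{local} functional of the digit pattern. The idea is to build $k-1$ out of two periodic blocks whose local ``pair densities'' straddle the target $\frac18$ from above and below, and then to tune their relative lengths so that the weighted average lands exactly on $\frac18$.

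First I would compute, for a single periodic block of period $T$ whose ones sit at relative offsets $0\le \delta_1<\dots<\delta_s<T$, the coefficient $c$ for which the restricted sum equals $c\cdot(\textup{block length})+O(1)$. Summing, for a representative one at offset $\delta_i$, the weights of all ones to its right — those in the same period together with a geometric series over all later periods — gives the closed form
\begin{equation*}
c=\frac{1}{2T}\left[\sum_{i<i'}2^{-(\delta_{i'}-\delta_i)}
+\frac{2^{-T}}{1-2^{-T}}\Big(\sum_i 2^{\delta_i}\Big)\Big(\sum_i 2^{-\delta_i}\Big)\right],
\end{equation*}
which one checks is invariant under translating or reversing the block, so it depends only on the period and the relative configuration of the ones. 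A direct evaluation yields $c_A=\tfrac{19}{136}>\tfrac18$ for the block $00001111$ (here $T=8$, four consecutive ones, offsets $\{0,1,2,3\}$) and $c_B=\tfrac{13}{108}<\tfrac18$ for the block $000111$ ($T=6$, three consecutive ones, offsets $\{0,1,2\}$).

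Next, concatenating $n_2$ digits of the $A$‑block and $n_1$ digits of the $B$‑block, the full sum \eqref{e.kj3} equals $c_A n_2+c_B n_1+O(1)$: the within‑block contributions are $c_A n_2+O(1)$ and $c_B n_1+O(1)$ by the previous step, while the pairs straddling the $A$–$B$ junction form a convergent geometric sum in the distance across the junction and contribute only $O(1)$. Forcing this to equal $\tfrac{n_1+n_2}{8}+O(1)=\tfrac n8+O(1)$ (using that $n_0$ is bounded and contributes nothing, since it is a string of zeros) forces
\begin{equation*}
\frac{n_1}{n_2}=\frac{c_A-\tfrac18}{\tfrac18-c_B}=\frac{1/68}{1/216}=\frac{54}{17},
\end{equation*}
exactly the ratio in \eqref{e.defk}. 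The digit‑count hypothesis $\sum_j k_j=n/2+O(1)$ is automatic, since each block has exactly half its digits equal to $1$. Finally, completeness of the blocks demands $n_2\equiv 0\pmod 8$ and $n_1\equiv 0\pmod 6$, which together with the ratio $54/17$ forces $n_1+n_2$ to be a multiple of $568=432+136$; taking the number of leading zeros to be $n_0=n\bmod 568$ fits the lengths together and explains the bound $n_0<568$ (for the finitely many $n<568$ the whole quantity is trivially $O(1)$).

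The main obstacle is the bookkeeping of the $O(1)$ error terms and checking that it is genuinely uniform in $n$. Three sources must be controlled: the truncation of the per‑block geometric series at the two ends of each block, the pairs crossing the $A$–$B$ junction, and the interaction with the leading zeros. Each is tamed by the exponential decay of the weight $2^{-(q-p)}$, which bounds every boundary contribution by a convergent geometric series independent of $n$; assembling these into a single clean constant is the one place requiring real care.
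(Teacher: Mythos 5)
Your proposal is correct and takes essentially the same route as the paper: the paper likewise computes the per-digit constants $\frac{13}{108}$ for the block $000111$ and $\frac{19}{136}$ for $00001111$ by splitting into within-period and cross-period geometric sums (exactly your pair-weight formula specialized to these two blocks), bounds the junction interaction by a constant, and tunes $\frac{n_1}{n_2}=\frac{54}{17}$ so that $\frac{13}{108}\cdot\frac{54}{71}+\frac{19}{136}\cdot\frac{17}{71}=\frac18$, with the same divisibility remark forcing $n_0<568$. Your general closed-form coefficient $c$ for an arbitrary periodic block is a tidy repackaging of the paper's two ad hoc computations, not a genuinely different argument.
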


Assuming this statement for the moment,  putting together
\eqref{e.kj}, \eqref{e.kj1}, and \eqref{e.kj3} for $k$ defined by
Lemma \ref{l.n8} above, we obtain
\begin{equation}\label{e.1p2}
\sum_{p\in \mathcal V_n:\, p_1< k/2^{n}} p_2 =  \left( \frac12 k -
\frac{n}{4} \right) + \frac{n}{8} + O(1)  = \frac12 k - \frac{n}{8}
+ O(1),
\end{equation}
and together with \eqref{e.cn}, \eqref{e.I4}, this yields:
\begin{equation}\label{e.cnk}
\int_{[0,1]^2} C_{\mathcal V_n^\alpha} dx = \left( 2^{n-2}
+\frac{n}{8}\right) - \frac12 k + \left( \frac12 k - \frac{n}{8}
\right) + O(1) = 2^{n-2} + O(1).
\end{equation}
Finally, \eqref{e.cnk} and \eqref{e.I1} give
\begin{equation}\label{e.dnk}
\int_{[0,1]^2} D_{\mathcal V_n^\alpha}(x) dx = O(1).
\end{equation}

Thus, it remains to prove Lemma \ref{l.n8}. We shall denote
\begin{equation}\label{e.snk}
S(n,k-1) := \sum_{l=2}^n \sum_{j=1}^{l-1} k_j \cdot k_l \cdot
2^{j-l-1}
\end{equation}
and will look at some base examples first. Let $k'$ be of the form
\begin{equation}\label{e.k'}
k' := \big( 000111 \,\dots \, 000111 \big)_2 ,
\end{equation}
where the sequence $000111$ is repeated $n'$ times, $n=6n'$. We then
have the following calculation:
\begin{align}
\label{e.k'1} S(6n', k') & = \frac12 \sum_{l'=1}^{n'-1}
\left(2^{-(6l'+1)}+ 2^{-(6l'+2)}+2^{-(6l'+3)}\right) \cdot \left(
\sum_{j'=0}^{l'-1} \left[ 2^{6j'+1} +2^{6j'+2} +2^{6j'+3} \right]
\right)\\
\label{e.k'2} & \quad + \frac12 \sum_{l'=0}^{n'-1} \left(\frac12 +
\left( \frac12 +\frac14 \right) \right)\\
\nonumber & = \frac12 \sum_{l'=1}^{n'-1} 2^{-6l'} \cdot (2^{6l'}-1)
\cdot \frac{1}{2^6 - 1}\cdot (2^{-1}+2^{-2} + 2^{-3})(2^1 + 2^2 +
2^3
) \,\,+ \frac12 \cdot \frac54 n'\\
\nonumber & = \left(\frac{7}{72} +\frac{45}{72} \right)\cdot
\frac{n}{6} + O(1)\\
\label{e.k'3} & = \frac{13}{108}n + O(1),
\end{align}
where the term in \eqref{e.k'1} describes the interactions of digits
in different triples and \eqref{e.k'2} arises from interactions
within the triples. (Notice that the obtained fraction
$\frac{13}{108} \approx 0.12037...$ is quite close to the desired
$\frac18 = 0.125$.)

Next we set $k''=(00001111....00001111)_2$, where the string
$00001111$ is repeated $n''$ times. An absolutely analogous
computation yields:
\begin{align}
\nonumber S(8n'', k'') & = \frac12 \sum_{l''=1}^{n''-1} 2^{-8l''}
\cdot (2^{8l''}-1) \cdot \frac{1}{2^8 - 1}\cdot (2^{-1}+2^{-2} +
2^{-3}+2^{-4})(2^1 + 2^2 + 2^3 +2^4
)\\
\nonumber & \quad + \frac12 \sum_{l'=0}^{n'-1} \left(\frac12 +
\left( \frac12 +\frac14 \right) +\left( \frac12 +\frac14 +\frac18 \right)\right)\\
\label{e.k''3} & = \frac{19}{136}n + O(1).
\end{align}

We are now ready to define the number $k$ which satisfies
\eqref{e.kj3}. Set
\begin{equation}\label{e.k}
k -1 := \big( \underbrace{00001111 \,\dots \, 00001111}_{n_2 \,
\textup{digits}}\, \underbrace{000111 \,\dots \, 000111}_{n_1 \,
\textup{digits}}  \big)_2 .
\end{equation}
Then we have,
\begin{equation}
S(n_1+n_2,k-1)= S(n_1, k') + S(n_2, k'') + I(n_1, n_2),
\end{equation}
where $I(n_1, n_2)$ describes the interaction between the two parts
of $k$. We can estimate:
\begin{equation}
I(n_1,n_2) = \frac12 \cdot \left(\sum_{l=n_1+1}^n k_l 2^{-l}
\right)\cdot \left( \sum_{j=1}^{n_1} k_j 2^j \right) \leq \frac12
\big(2^{-n_1-1} \cdot 2 \big) \cdot (2^{n_1+1}-1) \leq 1.
\end{equation}
We now choose $n_1$ and $n_2$ so that
$\frac{n_1}{n_2}=\frac{54}{17}$, i.e. $n_1 = \frac{54}{71}n$,
$n_2=\frac{17}{71}n$. We then obtain
\begin{align}
\nonumber S(n, k-1)& = \frac{13}{108}n_1 + \frac{19}{136}n_2 +O(1)\\
\nonumber & = \left( \frac{13\cdot 54}{108 \cdot 71} + \frac{19
\cdot 17}{136 \cdot 71} \right) n + O(1)\\
& = \frac{n}{8} + O(1),
\end{align}
which finishes the proof of Lemma \ref{l.n8}. Thus, if we set
$\alpha_0 = 1 - \frac{k}{2^n}$, where $k$ is as defined in
\eqref{e.k}, then the cyclic shift of the Van der Corput set by
$\alpha_0$ satisfies
\begin{equation}\label{e.dnk0}
\int_{[0,1]^2} D_{\mathcal V_n^{\alpha_0}}\,(x)\, dx = O(1).
\end{equation}

{\em Remark. } Of course, the above construction only works when $n$
is a multiple of $71\cdot 2\cdot  4=568$. However, it can be easily
adjusted for other values of $n$ just by setting the ``remainder"
digits equal to zero.

\section{The Fourier coefficients of the discrepancy function}

Having eliminated the main problem, we shall now proceed to show
that the remaining part of $D_{\mathcal V_n}$ behaves well under
cyclic shifts. We shall use the exponential Fourier basis (rather
than the more standard in this theory Haar basis) since it is better
adapted to cyclic shifts.

Obviously, for any $\alpha$, we have
\begin{equation}\label{e.sumexp}
\sum_{p\in \mathcal V_n^\alpha} e^{-2\pi i m p_1} = \sum_{p\in
\mathcal V_n} e^{-2\pi i m p_1} = \sum_{j=0}^{2^n-1} e^{-2\pi i
\frac{m}{2^n}j} \cdot e^{-\pi i \frac{m}{2^{n}}} =
\begin{cases}
0,\,\,\,\,\,\,\,\,\,\textup{if}\,\,m\nequiv 0 \mod 2^n,\\
m,\,\,\,\,\,\,\,\,\textup{if}\,\, m=2^n m',\, m'\textup{- even},\\
-m,\,\,\,\,\textup{if}\,\, m=2^n m',\, m'\textup{- odd}.
\end{cases}
\end{equation}

\subsection*{Fourier coefficients in the case $n_1 , n_2 \neq 0$.}
We first note that, for $n_1 , n_2 \neq 0$, the Fourier coeficient
of the linear part is:
\begin{equation}\label{e.lnfourier}
\widehat{L_N}(n_1,n_2) = - \frac{N}{4\pi^2 n_1 n_2}.
\end{equation}
The counting part yields
\begin{equation}\label{e.cnfourier}
\widehat{C_{\mathcal V_N}}(n_1,n_2) = -\frac{1}{4\pi^2 n_1 n_2}
 \sum_{p\in \mathcal V_N} \left(1- e^{-2\pi i n_1 p_1} \right) \left(1- e^{-2\pi i n_2 p_2}
 \right),
\end{equation}
and, thus,
\begin{align}\label{e.dnfourier}
 \widehat{D_{\mathcal V_n}} (n_1, n_2) &= \frac{1}{4\pi^2 n_1 n_2}
 \sum_{p\in \mathcal V_N} \left( e^{-2\pi i n_1 p_1} + e^{-2\pi i n_2
 p_2} - e^{-2\pi i (n_1 p_1 +n_2 p_2)} \right)
\end{align}

We now consider cases:
\begin{itemize}
\item Both $n_1$ and $n_2 \equiv 0 \mod 2^n$. Then $\widehat{D_{\mathcal V_n}} (n_1,
n_2) = C \frac{N}{4\pi^2 n_1 n_2}$, where $C$ takes values $-3$ or
$1$, depending on whether $n_1/2^n$ and $n_2/2^n$ are even or odd.
\item $n_1 \nequiv 0 \mod 2^n$, $n_2 \equiv 0 \mod 2^n$. In this case $\widehat{D_{\mathcal V_n}} (n_1,
n_2) =  \frac{N}{4\pi^2 n_1 n_2}\cdot e^{-\pi i n_2/2^n}$.
\item $n_2 \nequiv 0 \mod 2^n$, $n_1 \equiv 0 \mod 2^n$. In this case $\widehat{D_{\mathcal V_n}} (n_1,
n_2) =  \frac{N}{4\pi^2 n_1 n_2}\cdot e^{-\pi i n_1/2^n}$.
\item $n_1, n_2 \nequiv 0 \mod 2^n.$ Now we have $\widehat{D_{\mathcal V_n}} (n_1,
n_2) = -\frac{1}{4\pi^2 n_1 n_2}
 \sum_{p\in \mathcal V_N}  e^{-2\pi i (n_1 p_1 +n_2
 p_2)}$.
 \end{itemize}
 Changing $p_1$ to $(p_1 +\alpha) \mod 1$ in the above computations,
 with $\alpha = j/2^n$, we  notice that
 \begin{equation}\label{e.nochange1}
 \Abs{\widehat{D_{\mathcal V_n^\alpha}}(n_1,n_2)} =  \Abs{\widehat{D_{\mathcal
 V_n}}(n_1,n_2)} \,\,\,\,\textup{when}\,\, n_1, n_2 \neq 0.
 \end{equation}
 Indeed, in the first three cases the coefficient does not change,
 while in the last it is multiplied by $e^{-2\pi i n_1 \alpha}$.

 \subsection*{Fourier coefficients in the case $n_2=0$, $n_1\neq 0$.}
 We first note that, in this case
 \begin{equation}\label{e.lnfourier0}
\widehat{L_N}(n_1, 0) = -\frac{N}{4\pi i n_1},
 \qquad\textup{and}\qquad
\widehat{C_{\mathcal V_n}}(n_1, 0) = -\frac{1}{2 \pi i n_1}
 \sum_{p\in \mathcal V_n}  \left(1- e^{-2\pi i n_1 p_1}\right) \left(1- p_2 \right)
 .
 \end{equation}
 Thus, taking into account \eqref{e.sumall}, we have
 \begin{equation}\label{e.dnfourier0}
 \widehat{D_{\mathcal V_n}} (n_1,0) = \frac{1}{2 \pi i n_1} \sum_{p\in \mathcal V_n}  e^{-2\pi i
 n_1
 p_1}\cdot \left(1- p_2 \right).
 \end{equation}
 And, once again, we obtain that
 \begin{equation}\label{e.nochange2}
{\widehat{D_{\mathcal V_n^\alpha}}(n_1,0)} =  \widehat{D_{\mathcal
 V_n}}(n_1,0) \cdot e^{-2\pi i n_1 \alpha},
 \qquad\textup{i.e.}\,\,\Abs{\widehat{D_{\mathcal V_n^\alpha}}} =  \Abs{\widehat{D_{\mathcal
 V_n}}} \,\,\textup{if }\, n_1\neq 0, n_2= 0.
 \end{equation}

\subsection*{Fourier coefficients in the case $n_1=0$, $n_2\neq
0$.}As above, we can compute
 \begin{equation}\label{e.dnfourier00}
 \widehat{D_{\mathcal V_n}} (0, n_2) = \frac{1}{2 \pi i n_2} \sum_{p\in \mathcal V_n} \left(1- p_1 \right)\cdot e^{-2\pi i n_2
 p_2}.
 \end{equation}
 In the case $n_2 \equiv 0 \mod 2^n$, we obtain, using
 \eqref{e.sumall},
 \begin{equation}\label{e.nochange3}
 \widehat{D_{\mathcal V_n}} (0,
 n_2) = \widehat{D_{\mathcal V_n^\alpha}} (0, n_2)= \frac{N}{4 \pi i
 n_2}\cdot e^{-\pi i n_2/2^n}.
 \end{equation}

 The only somewhat non-trivial case is when $n_1 = 0$, $n_2 \nequiv 0
 \mod 2^n$.
 The Fourier coefficient in this case is
\begin{align}\label{e.dnfourier01}
 \nonumber \widehat{D_{\mathcal V_n^\alpha}} (0, n_2) & = \frac{1}{2 \pi i n_2} \sum_{p\in \mathcal
 V_n^\alpha} \left(1- p_1 \right)\cdot e^{-2\pi i n_2
 p_2}\\
 & = \widehat{D_{\mathcal V_n}} (0, n_2) + \frac{1}{2 \pi i n_2} \sum_{p\in \mathcal V_n :\, p_1>k/2^n}
   e^{-2\pi i n_2
 p_2},\,\,\,\,\,\textup{where}\,\,k/2^n=1-\alpha.
 \end{align}
We shall examine the last sum above. Assume $n_2 = 2^s m$, where
$0\le s <n$, $m$ is odd. Let us look over the part of the sum,
ranging over a dyadic interval of length $2^{-l}$, $1\le l \le n$.
This means that the first $l$ digits of $p_1$ (and thus, the last
$l$ digits of $p_2$) are fixed, and the last $n-l$ (the first $n-l$
of $p_2$) are allowed to change freely.
\begin{align}\label{e.dyadick}
\sum_{p\in \mathcal V_n :\, p_1 \in [q2^{-l}, (q+1)2^{-l})}
   e^{-2\pi i n_2  p_2}
& = e^{-2\pi i  2^s m \left( q_{n-l+1} 2^{-n+l-1} +\dots + q_{n}
2^{-n} + 2^{-n-1} \right)} \cdot \sum_{j=0}^{2^{n-l}-1} e^{-2\pi i m
2^{-n+l+s}j}.
\end{align}
It is easy to see that the last sum equals zero when $l+s<n$;
otherwise, its absolute value is at most $2^{n-l}$. 
We now split the interval $\{p_1 > k/2^n\}$ into at most $n$  dyadic
intervals of length $2^{-l}$,  $1\le l \le n$. We obtain
\begin{equation}\label{e.diff}
\ABS{\sum_{p\in \mathcal V_n :\, p_1>k/2^n}
  e^{-2\pi i n_2
 p_2}} \le \sum_{l=n-s}^n 2^{n-l} = 2^{s+1}-1.
\end{equation}
That is, for $n_2=2^s m$, by \eqref{e.dnfourier01} and
\eqref{e.diff}, we have
\begin{equation}\label{e.diff1}
\ABS{\widehat{D_{\mathcal V_n^\alpha}} (0, n_2)-\widehat{D_{\mathcal
V_n}} (0, n_2)} \le \frac{2^{s+1}}{2\pi n_2} = \frac{1}{\pi m}.
\end{equation}

\section{Proof of Theorem \ref{t.main}.}

For a function $f\in L^2\left([0,1]^2\right)$ and $S\subset \mathbb
Z^2$, we shall denote by $f_S$ the orthogonal projection of $f$ onto
the span of the Fourier terms with indices in $S$, i.e.
\begin{equation}\label{e.fs}
f_S (x_1,x_2) \eqdef \sum_{(n_1, n_2 )\in S} \widehat{f} (n_1,n_2)\,
e^{2\pi i (n_1 x_1 + n_2 x_2)}.
\end{equation}

Due to \eqref{e.nochange1}, \eqref{e.nochange2}, and Parseval's
identity, we have
\begin{equation}
\NOrm \big( D_{\mathcal V_n^{\alpha_0}}\,\big)_{\mathbb Z^2
\setminus \{n_1=0\}} .2. = \NOrm \big( D_{\mathcal
V_n}\,\big)_{\mathbb Z^2 \setminus \{n_1=0\}} .2.
\end{equation}
Inequality \eqref{e.diff1} yields
\begin{equation}
\NOrm \left( D_{\mathcal V_n^{\alpha_0}} - D_{\mathcal
V_n}\right)_{\{n_1=0, n_2\neq 0\}}.2.^2 \lesssim \sum_{s=0}^{n-1}
\sum_{m\,\,\textup{odd}} \frac1{m^2} \lesssim n= \log N.
\end{equation}
Thus, we see that $\Norm (D_{\mathcal V_n})_{\mathbb Z^2\setminus
(0,0)}.2.$ indeed  does not change much under cyclic shift. The
inequalities above and \eqref{e.rest} yield:
\begin{equation}
\Norm \big(D_{\mathcal V_n^{\alpha_0}}\big)_{\mathbb Z^2\setminus
(0,0)}.2. \lesssim \Norm \big(D_{\mathcal V_n}\big)_{\mathbb
Z^2\setminus (0,0)}.2. + \left( \log N \right)^{1/2} \lesssim \left(
\log N \right)^{1/2}.
\end{equation}
Together with the fact that $\int D_{\mathcal V_n^{\alpha_0}}
\lesssim 1$, \eqref{e.dnk0}, this finishes the proof:
\begin{equation}
\Norm D_{\mathcal V_n^{\alpha_0}} .2. \lesssim \left( \log N
\right)^{1/2}.
\end{equation}









 \begin{bibsection}
 \begin{biblist}

\bib{BLPV}{article}{
   author={Bilyk, D.},
   author={Lacey, M.},
   author={Parissis, I.},
   author={Vagharshakyan, A.},
   title={Exponential Squared Integrability of the Discrepancy Function in Two Dimensions},
   journal={to appear},
   }

\bib{MR610701}{article}{
   author={Chen, W. W. L.},
   title={On irregularities of distribution},
   journal={Mathematika},
   volume={27},
   date={1980},
   number={2},
   pages={153--170 (1981)},
   issn={0025-5793},
   review={\MR{610701 (82i:10044)}},
}

\bib{chen2}{article}{
   author={Chen, W. W. L.},
   title={On irregularities of distribution II.},
   journal={Quart. J. Math. Oxford.},
   volume={34},
   date={1983},
   pages={257--279},
}


\bib{MR1896098}{article}{
   author={Chen, W. W. L.},
   author={Skriganov, M. M.},
   title={Explicit constructions in the classical mean squares problem in irregularities of point distribution.},
   journal={J. Reine Angew.},
   volume={545},
   date={2002},
   pages={67--95},
   review={\MR{1896098 (2003g:11083)}},
}


\bib{MR1805869}{article}{
   author={Chen, W. W. L.},
   author={Skriganov, M. M.},
   title={Davenport's theorem in the theory of irregularities of point distribution.},
   journal={J. Math. Sci. (N. Y.)},
   volume={115},
   date={2003},
   number={1},
   pages={2076--2084},
   review={\MR{1805869 (2003d:11115)}},
}

\bib{Cor35}{article}{
    author={van der Corput, J. G.},
     title={Verteilungsfunktionen I},
   journal={Akad. Wetensch. Amdterdam, Proc.},
    volume={38},
      date={1935},
     pages={813\ndash 821},

}


\bib{MR0082531}{article}{
   author={Davenport, H.},
   title={On irregularities of distribution},
   journal={Mathematika},
   volume={3},
   date={1956},
   pages={131-135},
   review={\MR{0082531 (18,566a)}},
}


\bib{HZ}{article}{
   author={Halton, J. H.},
   author={Zaremba, S.C.}
   title={The extreme and $L\sp{2}$ discrepancies of some plane sets.},
   journal={Monatsh. Math.},
   volume={73},
   date={1969},
   pages={316-328},
   review={\MR{0252329 (40 :5550)}},
}

\bib{MR1697825}{book}{
   author={Matou{\v{s}}ek, Ji{\v{r}}\'\i},
   title={Geometric discrepancy},
   series={Algorithms and Combinatorics},
   volume={18},
   note={An illustrated guide},
   publisher={Springer-Verlag},
   place={Berlin},
   date={1999},
   pages={xii+288},
   isbn={3-540-65528-X},
   review={\MR{1697825 (2001a:11135)}},
}


\bib{MR0965955}{article}{
   author={Proinov, Petko D.},
   title={Symmetrization of the van der Corput generalized sequences.},
   journal={Proc. Japan Acad. Ser. A Math. Sci.},
   volume={64},
   date={1988},
   number={5},
   pages={159--162},
   review={\MR{0965955 (89i:11091)}},
}

\bib{MR0066435}{article}{
   author={Roth, K. F.},
   title={On irregularities of distribution},
   journal={Mathematika},
   volume={1},
   date={1954},
   pages={73--79},
   issn={0025-5793},
   review={\MR{0066435 (16,575c)}},
}

\bib{roth3}{article}{
   author={Roth, K. F.},
   title={On irregularities of distribution. III.},
   journal={Acta Arith.},
   volume={35},
   date={1979},
   pages={373--384},
   }


\bib{roth4}{article}{
   author={Roth, K. F.},
   title={On irregularities of distribution. IV.},
   journal={Acta Arith.},
   volume={37},
   date={1980},
   pages={67-75},
   }

\bib{MR0319933}{article}{
   author={Schmidt, Wolfgang M.},
   title={Irregularities of distribution. VII},
   journal={Acta Arith.},
   volume={21},
   date={1972},
   pages={45--50},
   issn={0065-1036},
   review={\MR{0319933 (47 \#8474)}},
}

\bib{MR2283797}{article}{
   author={Skriganov, M. M.},
   title={Harmonic analysis on totally disconnected groups and irregularities of point distributions.},
   journal={J. Reine Angew.},
   volume={600},
   date={2006},
   pages={25--49},
   review={\MR{2283797 (2007k:11122)}},
}

  \end{biblist}
 \end{bibsection}

 \end{document}